\documentclass[11pt,a4paper]{article}
\usepackage{array,epsfig,rotating,enumerate,dsfont,caption}
\usepackage{hyperref}
\hypersetup{colorlinks=true,citecolor= blue,linktocpage=true,linkcolor=blue}

\textwidth=34pc
\textheight=48.5pc
\oddsidemargin=1pc
\evensidemargin=1pc
\headsep=15pt
\topmargin=.6cm
\parindent=0pc
\parskip=2pt

\usepackage{bbm}
\usepackage{tikz}
\usepackage{xcolor}
\usepackage{mathtools}
\usepackage{marvosym}
\usepackage{amsmath,color}
\usepackage{amssymb,latexsym}
\usepackage{amsfonts}
\usepackage{amsthm}
\usepackage{dirtytalk}
\usepackage{graphicx}
\usepackage{caption}
\usepackage{comment}
\usepackage{subcaption}
\usepackage{pdflscape}
\usepackage{cancel}
\setcounter{page}{1}

\newtheorem{theorem}{Theorem}

\newtheorem{lemma}[theorem]{Lemma}
\newtheorem{definition}[theorem]{Definition}

\newtheorem{proposition}[theorem]{Proposition}
\theoremstyle{definition}


\usepackage{setspace}
\usepackage[mathscr]{euscript}
\usepackage[margin=1in]{geometry}


%
%

\title{Critical threshold 
for regular graphs}  

\author{Ishaan Bhadoo}

\date{}

\begin{document}

\maketitle

\begin{abstract}
In this article, we study the critical percolation threshold $p_c$ for $d$-regular graphs. It is well-known that $p_c \geq \frac{1}{d-1}$ for such graphs, with equality holding for the $d$-regular tree. We prove that among all quasi-transitive $d$-regular graphs, the equality $p_c(G) = \frac{1}{d-1}$ holds if and only if $G$ is a tree. Furthermore, we provide counterexamples that illustrate the necessity of the quasi-transitive assumption.

    \end{abstract}
    
\section{Introduction}
Consider independent (Bernoulli) bond\footnote{Although we work with bond percolation, the same methods apply to site percolation.} percolation on a locally finite, connected, infinite simple graph $G$ (all graphs are assumed to satisfy these conditions unless stated otherwise), i.e. we retain edges with probability $p$ and throw them away with probability $1-p$.

\vspace{2mm}
We use $\mathbb{P}_p$ to denote the corresponding percolation measure. An important function to consider in the context of percolation is, $\psi(p) = \mathbb{P}_p(\exists \ \text{an infinite connected component})$. This leads to the definition of the critical parameter: $$p_c:= \sup \{p: \psi(p) = 0\}$$ 

Determining the exact value of $p_c$ is challenging for most graphs; however for $d$-regular trees, one can show that $p_c = \frac {1}{d-1}$. The proof follows two steps: First, by a first-moment argument one can show that a graph $G$ with maximal degree $d < \infty$ satisfies $p_c(G) \geq \frac{1}{d - 1}.$ Second, for trees with degree $d$, by a dual second moment upper bound we can get $p_c \leq \frac{1}{d -1}$,  implying $p_c = \frac{1}{d -1}$ (see \cite[\textcolor{blue}{Claim 2.3.9}]{roch2024modern} for details). 

\vspace{2mm}
This leads to the following question: consider percolation on a $d$-regular graph $G$, are trees the only graphs with $p_c = \frac{1}{d-1}?$

\vspace{2mm}

The goal of this article is to show that trees are the only graphs with $p_c = \frac{1}{d-1}$ in the space of all $d$-regular quasi-transitive graphs. We start with some definitions. For a graph $G$, let $\mathrm{AUT}(G)$ be the group of all automorphisms (adjacency preserving bijections) of $G.$

\begin{definition}
    A graph $G$ is called \textit{quasi-transitive} if the number of orbits for the action of $\mathrm{AUT}(G)$ on $G$ is finite. It is called \textit{transitive} if there is only one orbit.
\end{definition}
Under the assumption of quasi-transitivity we have the following theorem:

\begin{theorem}
    Let $G$ be a quasi-transitive $d$-regular graph. Then $p_c(G) \geq \frac{1}{d - 1}$ and equality holds if and only if $G$ is a tree.
\end{theorem}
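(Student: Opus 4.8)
The plan is to treat the two nontrivial implications separately, taking for granted the bound $p_c(G)\ge \tfrac{1}{d-1}$ for graphs of maximal degree $d$ and the equality $p_c=\tfrac{1}{d-1}$ for the $d$-regular tree, both recalled in the introduction. What remains is the forward direction, which I would prove in contrapositive form: \emph{if $G$ is quasi-transitive, $d$-regular and contains at least one cycle, then $p_c(G)>\tfrac{1}{d-1}$.}

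First I would reduce everything to the connective constant. Fix a root $o$ and let $c_n$ be the number of self-avoiding walks of length $n$ starting at $o$; for a quasi-transitive graph the limit $\mu:=\lim_{n} c_n^{1/n}$ exists by a standard subadditivity (Hammersley) argument, and a first-moment union bound over open self-avoiding walks gives $\mathbb{P}_p(|C_o|=\infty)\le c_n p^n$ for every $n$ (an infinite, locally finite cluster at $o$ contains arbitrarily long simple paths from $o$), whence $p_c\ge 1/\mu$. Since any non-backtracking walk has at most $d-1$ continuations at each step, $c_n\le d(d-1)^{n-1}$ and therefore $\mu\le d-1$, with equality for the tree. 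Consequently it suffices to prove the \emph{strict} inequality $\mu<d-1$, that is, to exhibit $\lambda<d-1$ and $C<\infty$ with $c_n\le C\lambda^n$.

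The second ingredient is that quasi-transitivity upgrades ``has a cycle'' to a uniform, local statement. Let $g$ be the girth of $G$, finite by hypothesis, and for a vertex $v$ set $\rho(v)$ to be the distance from $v$ to the union of all cycles of length $g$. Automorphisms preserve girth and distances, so $\rho$ is $\mathrm{AUT}(G)$-invariant and hence constant on each orbit; connectedness makes $\rho(v)$ finite for every $v$, and finiteness of the number of orbits then gives $R:=\sup_v\rho(v)<\infty$. Thus \emph{every} vertex lies within distance $R$ of a cycle of length $g$.

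The heart of the argument, and the step I expect to be the main obstacle, is converting this uniform presence of short cycles into an exponential deficit in $c_n$. I would bound $c_n$ from above by the number of walks that are self-avoiding within every window of $m:=2R+g$ consecutive steps; these ``finite-memory'' walks are counted by a subshift of finite type, and quasi-transitivity reduces the bookkeeping to finitely many local patterns, so their exponential growth rate is the Perron root $\lambda_m$ of a finite, irreducible, non-negative transfer matrix. In the tree this matrix is exactly the non-backtracking one, with Perron root $d-1$. The point is that near every vertex there sits a girth cycle, so at least one otherwise-admissible non-backtracking transition is forbidden (the one closing that cycle), and, crucially, because such forbidden transitions recur with positive density, this deletes entries from the transfer matrix in a way that, by the strict monotonicity of the Perron root for irreducible matrices, forces $\lambda_m<d-1$. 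This yields $\mu\le\lambda_m<d-1$ and hence $p_c\ge 1/\mu>\tfrac{1}{d-1}$, completing the contrapositive. The delicate part is precisely the claim that the deficit is \emph{exponential} rather than subexponential: one must ensure the cycle-induced restrictions genuinely lower the \emph{global} growth rate, which is exactly where quasi-transitivity is indispensable, and where it should be expected to fail, since a single cycle grafted onto a $d$-regular tree leaves $\mu=d-1$ untouched, matching the counterexamples promised in the abstract. A useful cross-check is the universal-cover picture: self-avoiding walks in $G$ correspond to non-backtracking walks in the $d$-regular tree $T_d$ that project injectively to $G$, and $\mu<d-1$ amounts to the statement that such a walk fails to be injective with probability tending to $1$ exponentially fast, driven by the nontrivial deck group $\pi_1(G)$.
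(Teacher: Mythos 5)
Your overall strategy is sound but it is not the paper's: you take the connective-constant route, reducing the theorem to the strict inequality $\mu(G)<d-1$ for quasi-transitive $d$-regular graphs with cycles, which is exactly the Grimmett--Li result quoted as Theorem 4 in the paper (the paper notes that Theorem 2 follows from it via Lemma 3, but deliberately proceeds differently). The paper instead builds the universal cover of $H$, identifies it with $T_d$, checks that the projection is a strong covering map with uniformly non-trivial fibres, and invokes the Martineau--Severo strict monotonicity theorem to get $p_c(T_d)<p_c(H)$ directly. Interestingly, the one place both arguments use quasi-transitivity is the same: your observation that $\rho(v)$, the distance from $v$ to a shortest cycle, is $\mathrm{AUT}(G)$-invariant and hence uniformly bounded by some $R$ is precisely what the paper uses to establish uniformly non-trivial fibres. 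Your reductions (the first-moment bound $p_c\ge 1/\mu$, the non-backtracking bound $\mu\le d-1$, and the uniform bound $R<\infty$) are all correct.

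The gap is in the step you yourself flag as the main obstacle: the passage from ``every vertex is within $R$ of a girth cycle'' to $\lambda_m<d-1$ is not actually carried out. As written, the Perron-root comparison does not close: the ``tree'' transfer matrix with root $d-1$ and the memory-$m$ transfer matrix of $G$ do not live on the same finite state space, so ``deleting entries'' is not yet meaningful; irreducibility of the quotient matrix on orbits of local patterns is asserted but not checked (and can fail, forcing you to isolate a dominant communicating class); and ``forbidden transitions recur with positive density'' is the conclusion you need, not a hypothesis you have established. Fortunately the gap can be closed without any transfer-matrix machinery, using only what you already proved. Set $m:=R+g+1$ and $\epsilon:=(d-1)^{-m}$. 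For any vertex $v$ and any neighbour $u$ of $v$, there are exactly $(d-1)^m$ non-backtracking walks of length $m$ from $v$ whose first step avoids $u$; among these, at least $(d-1)^{m-(R+g+1)}=1$ walk of the form (step to some $v_1\neq u$, geodesic to a nearest girth cycle, once around that cycle) revisits a vertex, and each of its $(d-1)^{m-\ell}$ non-backtracking extensions to length $m$ is likewise not self-avoiding. Hence the number of self-avoiding walks of length $m$ from $v$ with first step avoiding $u$ is at most $(1-\epsilon)(d-1)^m$. Cutting a self-avoiding walk of length $km$ into $k$ blocks of length $m$ (each block after the first is a self-avoiding walk whose first step avoids the penultimate vertex of the previous block) yields $c_{km}\le d(d-1)^{m-1}\bigl((1-\epsilon)(d-1)^m\bigr)^{k-1}$, whence $\mu\le (1-\epsilon)^{1/m}(d-1)<d-1$. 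With this substitution your argument becomes a complete, self-contained proof of the theorem, alternative to the covering-map proof in the paper.
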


It is important to note that being quasi-transitive is essential. In Section 2, using the general theory for percolation on trees we give a counterexample (for each $d \geq 3$) when one drops this assumption. Next, we show the above theorem by constructing a covering of every quasi-transitive $d$-regular graph using regular trees and then use the strict monotonicity result of \cite{martineau2019strict}. The main tools we use are from \cite{martineau2019strict} and \cite{lyons2017probability}. For completeness, we cover the required background for percolation on trees and some techniques from the theory of coverings, in an effort to make this article self-contained.

\subsection{Connection to the connective constant}
A self-avoiding walk is a path that visits no vertex more than once. To define the connective constant, fix a starting vertex $o$, the set of all self-avoiding walks of length $n$ starting at $o$ is denoted as $\text{SAW}_n$. The connective constant $\mu(G)$ of a graph $G$ is then defined as $$\mu(G) := \underset{n\rightarrow\infty}{\lim} |\text{SAW}_n|^{\frac{1}{n}}$$

By Fekete's lemma, it can be checked that this limit exists. The connective constant is closely related to the critical threshold by the following standard lemma (see \cite{lyons2017probability}).

\begin{lemma}
    For any connected infinite graph $G,\ p_c(G) \geq \frac{1}{\mu(G)}\footnote{$\text{This also shows} \ p_c(G) \geq \frac{1}{d-1} \text{for a graph with maximum degree} \ d.$}.$ 
\end{lemma}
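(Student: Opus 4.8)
The plan is to prove the lemma $p_c(G) \geq \frac{1}{\mu(G)}$ via a first-moment (expected-number-of-open-paths) argument. First I would fix a starting vertex $o$ and consider the event that the open cluster of $o$ is infinite. A standard observation is that if the cluster of $o$ is infinite, then for every $n$ there is an open self-avoiding walk of length $n$ starting at $o$: an infinite cluster contains arbitrarily long simple paths from $o$. Hence $\psi(p) > 0$ (equivalently, a positive probability of an infinite cluster at $o$) would force, for every $n$, the existence of at least one open path of length $n$ from $o$, so it suffices to show that for $p < \frac{1}{\mu(G)}$ the expected number of such open paths tends to $0$.

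The key computation is then to bound $\mathbb{E}_p[N_n]$, where $N_n$ is the number of open self-avoiding walks of length $n$ starting at $o$. Since each such walk uses exactly $n$ distinct edges, and edges are open independently with probability $p$, linearity of expectation gives $\mathbb{E}_p[N_n] = |\mathrm{SAW}_n| \, p^n$. By the definition of the connective constant, $|\mathrm{SAW}_n|^{1/n} \to \mu(G)$, so for any $\varepsilon > 0$ and all large $n$ we have $|\mathrm{SAW}_n| \leq (\mu(G) + \varepsilon)^n$, and therefore $\mathbb{E}_p[N_n] \leq \big((\mu(G)+\varepsilon)\,p\big)^n$. Whenever $p < \frac{1}{\mu(G)}$, I can choose $\varepsilon$ small enough that $(\mu(G)+\varepsilon)\,p < 1$, which forces $\mathbb{E}_p[N_n] \to 0$ as $n \to \infty$.

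To finish, I would apply Markov's inequality: $\mathbb{P}_p(N_n \geq 1) \leq \mathbb{E}_p[N_n] \to 0$. If the cluster of $o$ were infinite with positive probability, then $\mathbb{P}_p(N_n \geq 1)$ would be bounded below by that positive probability for every $n$ (since an infinite cluster guarantees an open path of every length), a contradiction. Hence for $p < \frac{1}{\mu(G)}$ the cluster of $o$ is almost surely finite, and by a union bound over a countable vertex set (or by transitivity of the event of having \emph{some} infinite component) we get $\psi(p) = 0$, whence $p_c(G) \geq \frac{1}{\mu(G)}$.

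The main obstacle I expect is the logical bridge in the last step: carefully justifying that a positive probability of an infinite component yields a uniform-in-$n$ lower bound on $\mathbb{P}_p(N_n \geq 1)$. One must be precise that ``infinite cluster at $o$'' implies the existence of a simple open path of every length from $o$ (this uses that the cluster is locally finite and infinite, so by König's-lemma-type reasoning it contains an infinite simple path, or at least simple paths of unbounded length), and then relate the event $\{\exists\ \text{an infinite component}\}$ back to the cluster of a fixed root, which is where one typically invokes a union over vertices. The footnote's consequence $p_c(G) \geq \frac{1}{d-1}$ for maximum degree $d$ then follows immediately from the crude count $|\mathrm{SAW}_n| \leq d(d-1)^{n-1}$, giving $\mu(G) \leq d-1$.
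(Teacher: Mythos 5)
Your proposal is correct and is essentially the paper's own argument: a first-moment bound on the number of open self-avoiding walks of length $n$ from a fixed root, combined with the observation that an infinite cluster at $o$ forces such a walk to exist for every $n$. The only cosmetic difference is direction — the paper takes $p > p_c$ and deduces $\mu(G)p \geq 1$ by taking $n$-th roots, whereas you take $p < 1/\mu(G)$ and show $\psi(p) = 0$ via Markov plus a union bound over vertices — but the content is identical.
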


\begin{proof}
 Let \(\mathcal{C}(o)\) denote the connected component of \(o\) in Bernoulli percolation with parameter \(p\). Define \(S_n(o)\) as the set of self-avoiding walks of length \(n\) within \(\mathcal{C}(o)\). If \(\mathcal{C}(o)\) is infinite, then \(S_n(o) \neq \emptyset\) for all \(n\). From this, we deduce:
\[
\mathbb{P}_p(o\leftrightarrow\infty) \leq \mathbb{P}_p[S_n(o) \neq \emptyset] \overset{\text{(Markov's Ineq)}}{\leq} \mathbb{E}_p[|S_n(o)|] = |\text{SAW}_n| p^n.
\]

By taking $n$-th roots we get, $1 \leq \mu(G)p \ \text{whenever } \mathbb{P}_p(o\leftrightarrow\infty) > 0.$
In particular, this holds for $p > p_c$.
\end{proof}

An analogous statement to Theorem 2 regarding the connective constant was previously shown by Grimmett and Li \cite[\textcolor{blue}{Thm 4.2}]{grimmett2015bounds}. 
\begin{theorem}[G. Grimmett, Z. Li, \cite{grimmett2015bounds}]
    Let \( G = (V, E) \) be a $d$-regular quasi-transitive graph  and let \(d \geq 3\). We have that \(\mu(G) < d- 1\) if $G$ has cycles.
\end{theorem}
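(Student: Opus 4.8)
The plan is to upgrade the soft bound $\mu(G)\le d-1$ to a strict inequality by exhibiting a uniform \emph{multiplicative defect} in the self-avoiding walk counts. Write $c_n:=|\mathrm{SAW}_n|$ for the number of SAWs of length $n$ from $o$. Every SAW of length $n+\ell$ restricts to a SAW of length $n$ followed by a self-avoiding continuation of length $\ell$, so
\[
c_{n+\ell}=\sum_{\gamma\in\mathrm{SAW}_n}N_\ell(\gamma),
\]
where $N_\ell(\gamma)$ is the number of length-$\ell$ self-avoiding continuations of $\gamma$. I would show that there exist $\ell\in\mathbb{N}$ and $\epsilon>0$, depending only on $G$, with $N_\ell(\gamma)\le(1-\epsilon)(d-1)^\ell$ for \emph{every} $\gamma$. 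Granting this, $c_{n+\ell}\le(1-\epsilon)(d-1)^\ell c_n$, and iterating and taking $n$-th roots yields $\mu(G)\le(d-1)(1-\epsilon)^{1/\ell}<d-1$ (here $d\ge3$ makes $d-1>1$, so the bound is nontrivial). Thus the entire theorem reduces to the uniform defect estimate.

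To produce the defect I would pass to the universal cover $\pi:T\to G$, where $T=T_d$ is the $d$-regular tree, and fix a lift $\tilde o$ of $o$. Lifting is a bijection between walks from $o$ in $G$ and walks from $\tilde o$ in $T$, carrying non-backtracking walks to non-backtracking walks, i.e.\ to SAWs of $T$. A walk in $G$ is self-avoiding exactly when it is non-backtracking and its lift projects \emph{injectively} to $G$; hence the self-avoiding continuations of $\gamma$ correspond to those non-backtracking $\ell$-step continuations of the lift $\tilde\gamma$ whose projection avoids all previously visited vertices. The key point is that the total number of non-backtracking $\ell$-step continuations of $\tilde\gamma$ in $T$ is exactly $(d-1)^\ell$ (one forbidden edge at each step). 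So it suffices to exhibit, for every $\gamma$, at least $\epsilon(d-1)^\ell$ such tree-continuations that project to a \emph{non}-self-avoiding walk; call these \emph{bad}.

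The geometric input comes from quasi-transitivity. Since $G$ has a cycle it has finite girth $g$, and the set $S$ of vertices lying on some shortest cycle is nonempty and $\mathrm{AUT}(G)$-invariant, hence a union of orbits; as there are finitely many orbits and $G$ is connected, $\mathrm{dist}(\cdot,S)$ is bounded by a constant $R=R(G)$. Fix $\gamma$ ending at $v$ with predecessor $u$. Using $d\ge3$, pick a first step from $v$ to a neighbor $a\ne u$, follow a geodesic from $a$ to some $w\in S$ with $\mathrm{dist}(a,w)\le R$, and then traverse a shortest cycle $C\ni w$ once around, returning to $w$. This walk is non-backtracking (the initial step avoids $u$; a geodesic is self-avoiding; and at the cycle one of the two directions around $C$ avoids the incoming edge, with $g\ge3$ ensuring a simple traversal), it revisits $w$, so its projection is \emph{not} self-avoiding, and its length is at most $\ell_0:=R+g+1$. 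Setting $\ell:=\ell_0$, this single bad prefix extends freely inside $T$, and every non-backtracking extension of a bad walk is again bad and non-backtracking; hence the number of bad length-$\ell$ continuations is at least $1$, giving a bad fraction of at least $(d-1)^{-\ell}=:\epsilon>0$. Since $R,g,\ell$ depend only on $G$, the estimate $N_\ell(\gamma)\le(1-\epsilon)(d-1)^\ell$ holds uniformly in $\gamma$, and the theorem follows.

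The main obstacle is precisely achieving \emph{uniformity} over all $\gamma$ and all orbit types at once: this is where quasi-transitivity is indispensable, as it is what forces $R$—and hence $\ell$ and $\epsilon$—to be finite and independent of the endpoint. A secondary technical nuisance is the junction between $\gamma$ and the cycle-closing excursion, where one must verify that the constructed walk is non-backtracking \emph{as a continuation of} $\gamma$ (so that it is genuinely counted among the $(d-1)^\ell$ tree-continuations); this requires the small case analysis at $v$ and $w$ indicated above, relying on $d\ge3$ and $g\ge3$. Neither point is deep, but both must be handled carefully so that the constants remain uniform.
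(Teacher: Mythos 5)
A point of context first: the paper does not actually prove this statement --- it is imported from Grimmett and Li \cite{grimmett2015bounds} so that the author can remark that Theorem 2 would also follow from it via Lemma 3, while the paper's own route to Theorem 2 goes through universal covers and the Martineau--Severo strict monotonicity theorem. So your proposal is an attempt to reprove the cited combinatorial result from scratch. Its architecture is the right one for such a proof: comparing self-avoiding continuations with the exactly $(d-1)^\ell$ non-backtracking continuations in the universal cover, extracting a uniform defect, and letting quasi-transitivity supply the uniform constants $R$, $g$, $\ell$. The reduction from $N_\ell(\gamma)\le(1-\epsilon)(d-1)^\ell$ to $\mu\le(d-1)(1-\epsilon)^{1/\ell}<d-1$ is correct, as is the observation that a single bad prefix of length at most $\ell$ forces $N_\ell(\gamma)\le(d-1)^\ell-1$, and the boundedness of $\mathrm{dist}(\cdot,S)$ is correctly deduced from quasi-transitivity.

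The gap is in the construction of the bad prefix, and it sits exactly at the one junction you do not analyze. Your walk is: a step $v\to a$ with $a\ne u$, followed by ``a geodesic from $a$ to some $w\in S$''. Nothing prevents that geodesic from beginning with the step $a\to v$, in which case the continuation $v\to a\to v\to\cdots$ backtracks and is therefore \emph{not} among the $(d-1)^\ell$ tree-continuations you are counting; your parenthetical checklist covers the junction at $v$ (first step avoids $u$) and at $w$ (choice of direction around $C$), but not the junction at $a$. The repair is not a one-liner: taking a geodesic from $v$ itself to $S$ may force the first step to be the forbidden vertex $u$; rerouting through a neighbour $a\ne u$ reintroduces the problem that all short routes from $a$ to $S$ may funnel back through $v$ (and then through $u$), and a priori $v$ could even separate $a$ from $S$ in $G$. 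What is needed is a separately stated and proved lemma of the form: \emph{for every directed edge $(x,y)$ there is a non-backtracking, non-self-avoiding walk of length at most $L$ whose first step is $x\to y$} (or at least: every vertex has two outgoing edges with this property), established by induction on $\mathrm{dist}(\cdot,S)$ with a case analysis at each junction, with uniformity of $L$ coming from the fact that directed edges fall into finitely many orbits. That lemma is where the genuine combinatorial content of the Grimmett--Li argument lives, and as written your sketch asserts rather than supplies it.
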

By using Lemma 3, Theorem 2 follows. However, the techniques used in the proof of Theorem 4 are entirely combinatorial and therefore differ from our method.

\vspace{2mm}
\section{Percolation on trees}
In this section for $d \geq 3$ we give an example of a $d$-regular graph with cycles such that $p_c = \frac{1}{d -1}$ (such an example cannot exist for $d = 2)$.  To do this we use the theory of percolation on locally finite trees. We start by defining the branching number of a locally finite tree. 

\vspace{2mm}
\subsection{Branching number and the critical point for trees}
Suppose $T = (V, E)$ is an infinite locally finite tree with root $O$. We imagine the tree $T$ as growing downward from the root $O$. For $x, y \in V$, we write $ x \leq y$ if $x$ is on the shortest path from $O$ to $y$; and $T_x$ for the subtree of $T$ containing all the vertices $y$ with $y\geq x$. For a vertex $x \in V$ we denote by $d(x,O)$ the graph distance from $O$ to $x$. We want to understand the critical point for a tree, motivated by the comparison from Galton-Watson branching processes this naturally leads us to the study of the average number of branches coming out of a vertex which is called the  branching number of a tree. To rigorously define this we use conductances and flows on trees. For each edge $e$ we define the conductance of an edge to be $c(e):= \lambda^{-|e|}$, where $|e|$ denotes the distance of the edge $e$ from the root $O.$ It is natural to define conductances decreasing exponentially with the distance since trees grow exponentially. 

\vspace{2mm}
If $\lambda$ is very small then due to large conductances there is a non-zero flow on the tree satisfying $0 \leq \theta(e) \leq \lambda^{-|e|} $. While increasing the value of $\lambda$ we observe a critical value $\lambda_c$ above which such a flow does not exists. This is precisely the branching number. Specifically, $$br(T) := \sup \{\lambda : \exists \ \text{a non-zero flow}\ \theta \ \text{on} \ T \ \text{such that} \ 0 \leq \theta(e) \leq \lambda^{-|e|} \  \forall \ e \in T \}$$

By using the max-flow min-cut theorem we get that, 

$$br(T) = \sup \{\lambda : \inf_{\pi} \displaystyle \sum_{e \in \pi} \lambda^{-|e|} > 0 \}$$

Where the infimum is over all cutsets $\pi$ separating $O$ from $\infty.$ Using this as the definition it is easy to see that $p_c \geq \frac{1}{br(T)}$, indeed by using a first moment bound at $\lambda = \frac{1}{p}$ for $p > p_c.$ By using a (weighted) second-moment method it can be shown that the reverse inequality also holds. In particular, we have the following result of Lyons. 
\begin{theorem} [R. Lyons, \cite{lyons1990random}]
\label{brt}
    Let $T$ be a locally finite, infinite tree then, $p_c(T) = \frac{1}{br(T)}$ where $br(T)$ is the branching number of the tree.
\end{theorem}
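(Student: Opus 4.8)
The plan is to prove the two inequalities $p_c(T)\le \tfrac1{br(T)}$ and $p_c(T)\ge\tfrac1{br(T)}$ separately, as the discussion above foreshadows. Throughout I set $\lambda=1/p$, write $\Pi_n$ for the set of vertices at distance $n$ from the root $O$, and use that for $v\in\Pi_n$ the unique $O$--$v$ path is open with probability $p^{n}=\lambda^{-n}$. For the lower bound, fix $p<\tfrac1{br(T)}$, so that $\lambda>br(T)$ and hence $\inf_\pi\sum_{e\in\pi}\lambda^{-|e|}=0$ by the cutset description of $br(T)$. Any cutset $\pi$ separating $O$ from $\infty$ must be crossed by the open cluster of $O$ on the event $\{O\leftrightarrow\infty\}$, so a first--moment (union) bound gives $\mathbb{P}_p(O\leftrightarrow\infty)\le\sum_{e\in\pi}p^{|e|}=\sum_{e\in\pi}\lambda^{-|e|}$; taking the infimum over $\pi$ forces $\mathbb{P}_p(O\leftrightarrow\infty)=0$. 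Running the same estimate from every vertex (the branching number does not depend on the choice of root) and summing over the countable vertex set shows $\psi(p)=0$, whence $p_c(T)\ge\tfrac1{br(T)}$. This is the straightforward half.

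The substance is the reverse inequality, proved by a weighted second--moment method. Fix $p>\tfrac1{br(T)}$, so $\lambda<br(T)$. The key step is to manufacture a nonzero flow $\theta$ from $O$ to infinity of finite energy $\mathcal{E}(\theta)=\sum_e\theta(e)^2\lambda^{|e|}$ relative to the resistances $r(e)=\lambda^{|e|}=1/c(e)$. To do this I would pick $\lambda'\in(\lambda,br(T))$, so that $\kappa:=\inf_\pi\sum_{e\in\pi}(\lambda')^{-|e|}>0$; by the max--flow min--cut theorem (applied on finite truncations and passed to a limit) there is a unit flow $\theta$ with $\theta(e)\le\kappa^{-1}(\lambda')^{-|e|}$, and then
\[
\mathcal{E}(\theta)\;\le\;\kappa^{-1}\sum_e\theta(e)\Big(\tfrac{\lambda}{\lambda'}\Big)^{|e|}\;\le\;\kappa^{-1}\sum_{n\ge1}\Big(\tfrac{\lambda}{\lambda'}\Big)^{n}\;<\;\infty,
\]
using $\lambda<\lambda'$ and that the total flow across each level is $1$. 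With this flow in hand, set $X_n=\sum_{v\in\Pi_n}\lambda^{n}\theta_v\,\mathbbm{1}(O\leftrightarrow v)$, where $\theta_v$ is the flow through $v$, so that $\mathbb{E}_p[X_n]=\sum_{v\in\Pi_n}\theta_v=1$.

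For the second moment I would use $\mathbb{P}_p(O\leftrightarrow u,\,O\leftrightarrow v)=p^{\,2n-|u\wedge v|}$, where $u\wedge v$ is the last common ancestor, to obtain $\mathbb{E}_p[X_n^2]=\sum_{u,v\in\Pi_n}\theta_u\theta_v\lambda^{|u\wedge v|}$. Telescoping $\lambda^{|u\wedge v|}=1+\sum_{j=1}^{|u\wedge v|}(\lambda^{j}-\lambda^{j-1})$ and applying flow conservation in the form $\sum_{u\ge z}\theta_u=\theta_z$ collapses the double sum to $1+\sum_{j\ge1}(\lambda^{j}-\lambda^{j-1})\sum_{|e|=j}\theta(e)^2\le 1+\mathcal{E}(\theta)$, a bound uniform in $n$. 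The Cauchy--Schwarz (Paley--Zygmund) inequality then yields $\mathbb{P}_p(X_n>0)\ge\mathbb{E}_p[X_n]^2/\mathbb{E}_p[X_n^2]\ge(1+\mathcal{E}(\theta))^{-1}>0$. Since flow conservation makes $\{X_n>0\}$ a decreasing sequence of events with $\bigcap_n\{X_n>0\}\subseteq\{O\leftrightarrow\infty\}$, we conclude $\mathbb{P}_p(O\leftrightarrow\infty)>0$, so $p\ge p_c(T)$; letting $p\downarrow\tfrac1{br(T)}$ gives $p_c(T)\le\tfrac1{br(T)}$.

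I expect the main obstacle to be the construction and bookkeeping around the finite--energy flow: extracting such a flow purely from the combinatorial cutset definition of $br(T)$ (the $\lambda'$ trick combined with max--flow min--cut on infinite trees, which needs a truncation/compactness argument), together with verifying the telescoping identity $\mathbb{E}_p[X_n^2]=\sum_{u,v}\theta_u\theta_v\lambda^{|u\wedge v|}$ and its reduction to the energy $\mathcal{E}(\theta)$. Everything else is a routine first/second--moment computation.
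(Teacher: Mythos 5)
Your proposal is correct and follows exactly the route the paper indicates: the cutset first--moment bound for $p_c(T)\ge 1/br(T)$ and the weighted second--moment method with a finite--energy flow for the reverse inequality (the paper itself does not write this out, deferring to Lyons--Peres, whose conductance lower bound on $\mathbb{P}_p(O\leftrightarrow\infty)$ is proved by precisely the second--moment computation you give). The details check out, including the energy bound via the $\lambda'$ trick and the telescoping reduction of $\mathbb{E}_p[X_n^2]$ to $1+\mathcal{E}(\theta)$.
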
 
\emph{Proof:} The proof essentially uses a lower bound on being connected to infinity in terms of conductances \cite{lyons1992random}. See \cite{lyons2017probability} for the proof.

\vspace{2mm}
Thus, to find the critical threshold for a tree one needs to know how to compute its branching number. However, the definition of the branching number makes this in general hard, thankfully, for sub-periodic trees (defined below) we have a significantly easier method of calculating the branching number.

\subsection{Superiodic trees}
For a tree $T$ we define its upper exponential growth rate as $$\overline{grT} := \limsup_{n \rightarrow \infty} |T_n|^{\frac{1}{n}}$$ where $T_n$ is the number of vertices at a distance $n$ from $O.$ Similarly one can define the lower exponential growth rate as $$\underline{grT} := \liminf_{n \rightarrow \infty} |T_n|^{\frac{1}{n}}$$ We say that the exponential growth rate exists if $ \overline{grT}= \underline{grT}$. 

\vspace{2mm}
We now recall the definition of subperiodic trees from \cite{lyons2017probability}. Fix a $N\geq 0$. An infinite tree $T$ is called \textbf{$N$- subperiodic} if $\forall x \in T$ there exists an adjacency preserving injection $f: T_x \rightarrow T_{f(x)}$ with $|f(x)| \leq N$ (where $|\cdot|$ is the distance from $O$). A tree is called \textbf{subperiodic} if there exists a $N$ for which it is $N$-subperiodic. Since in general the growth rate is easier to calculate, the following theorem is the key to calculating $p_c$ for subperiodic trees.

\begin{theorem}[Subperiodicity and Branching Number, \cite{lyons2017probability}]
    For every subperiodic infinite tree $T$, the exponential growth rate exists and $\text{br}T = \text{gr}T.$
\end{theorem}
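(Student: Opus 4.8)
The plan is to establish the two inequalities $br(T) \le \underline{grT} \le \overline{grT}$ and $br(T) \ge \overline{grT}$; together they collapse all three quantities to a single value, which simultaneously shows that the growth rate exists (i.e.\ $\underline{grT}=\overline{grT}$) and that it equals $br(T)$. The first chain holds for every locally finite tree and uses no subperiodicity: testing the cutset characterisation of $br(T)$ against the level cutsets $\pi_n$ (all edges at distance $n$ from $O$) gives $\sum_{e\in\pi_n}\lambda^{-|e|}=|T_n|\,\lambda^{-n}$, and whenever $\lambda>\underline{grT}$ this tends to $0$ along a subsequence, so $\inf_\pi\sum_{e\in\pi}\lambda^{-|e|}=0$ and hence $\lambda\ge br(T)$. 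Letting $\lambda\downarrow\underline{grT}$ yields $br(T)\le\underline{grT}$, while $\underline{grT}\le\overline{grT}$ is trivial. Thus the real content is the reverse bound $br(T)\ge\overline{grT}$.

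Subperiodicity enters first through a submultiplicativity estimate. For $x\in T$ write $g_n(x)$ for the number of vertices of $T_x$ at distance $n$ below $x$, and set $a_n:=\max_{|v|\le N}g_n(v)$, the maximum over the finitely many vertices within distance $N$ of $O$. For any $x$ the injection $f\colon T_x\to T_{f(x)}$ is adjacency preserving and $|f(x)|\le N$, so it preserves distance to the respective roots and gives $g_n(x)\le g_n(f(x))\le a_n$ for all $n$; in particular every vertex has at most $D_0:=\max_{|v|\le N}g_1(v)<\infty$ children, so $T$ has bounded degree. Writing $g_{m+n}(v)=\sum_w g_n(w)$ over the $g_m(v)\le a_m$ descendants $w$ of $v$ at depth $m$, and bounding each $g_n(w)\le a_n$, I get $a_{m+n}\le a_m a_n$. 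By Fekete's lemma $\lim_n a_n^{1/n}$ exists and equals $\inf_n a_n^{1/n}$, whence $a_n\ge(\mathrm{gr}\,T)^n$ for all $n$; since $|T_n|=g_n(O)\le a_n\le D_0^{\,N}|T_n|$, the growth rate $\mathrm{gr}\,T=\lim_n|T_n|^{1/n}$ exists and equals $\lim_n a_n^{1/n}$. Combined with the first paragraph this already yields $br(T)\le\underline{grT}=\mathrm{gr}\,T$.

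It remains to prove $br(T)\ge\mathrm{gr}\,T$, i.e.\ that for every $\lambda<\mathrm{gr}\,T$ the min-cut $\Theta(\lambda):=\inf_\pi\sum_{e\in\pi}\lambda^{-|e|}$ is strictly positive (equivalently, by max-flow--min-cut, that $T$ carries a non-zero flow $\theta$ with $0\le\theta(e)\le\lambda^{-|e|}$). The second, decisive, use of subperiodicity is a transfer of cutsets: writing $m_x(\lambda)$ for the min-cut of $T_x$ with edges weighted by $\lambda^{-(|e|-|x|)}$, the image under $f$ of a ray from $x$ is a ray from $f(x)$, so the pull-back of any cutset of $T_{f(x)}$ is a cutset of $T_x$ of no greater weight; this gives the uniform bound $m_x(\lambda)\le\max_{|v|\le N}m_v(\lambda)=:C(\lambda)<\infty$. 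One then feeds this into the min-cut recursion $m_x(\lambda)=\lambda^{-1}\sum_{y}\min(1,m_y(\lambda))$, the sum over the children $y$ of $x$, together with the growth bound $a_n\ge(\mathrm{gr}\,T)^n$.

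The main obstacle is precisely this last step: a large growth rate controls only the level cutsets, whereas $\Theta(\lambda)$ is an infimum over \emph{all} cutsets, which may cut deep and irregularly, and the cap $\min(1,\cdot)$ in the recursion prevents the growth bound from propagating naively to the min-cut. Subperiodicity is exactly what forbids arbitrarily long or severe ``thin'' regions deep in the tree that would otherwise permit cheap cuts, and quantifying this is the delicate point. I expect to resolve it by exploiting that the injections $f$ realise every subtree $T_x$ inside one of the finitely many shallow subtrees $\{T_v:|v|\le N\}$, so that up to embedding there are only finitely many subtree ``types''; this finiteness should reduce the construction of an admissible flow (equivalently, a uniform positive lower bound for the capped recursion) to a finite fixed-point / Perron--Frobenius problem whose solvability for every $\lambda<\mathrm{gr}\,T$ is governed by the growth rate itself. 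Turning the pointwise embeddings and the bound $a_n\ge(\mathrm{gr}\,T)^n$ into a genuine non-zero flow on all of $T$ is the crux of the argument.
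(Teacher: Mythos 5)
The paper does not prove this theorem; it is quoted from Lyons--Peres, so the comparison below is with the standard proof in that reference. Your first two paragraphs are correct and essentially match that treatment: the level-cutset bound gives $br(T)\le\underline{gr}\,T$ for any tree, and the observation that an adjacency-preserving injection between trees preserves distances yields $g_n(x)\le a_n$, the submultiplicativity $a_{m+n}\le a_m a_n$, and hence (via Fekete and $|T_n|\le a_n\le D_0^N|T_n|$) the existence of $\mathrm{gr}\,T$. That much is fine.

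The genuine gap is the third paragraph, which is the entire content of the theorem: you do not prove $br(T)\ge\mathrm{gr}\,T$, you only describe hopes for proving it, and the route you sketch would not work as stated. Subperiodicity gives an \emph{injection} of each $T_x$ into one of the finitely many subtrees $T_v$ with $|v|\le N$, not an isomorphism; in general the subtrees of a subperiodic tree fall into infinitely many isomorphism types, so there is no finite transition structure on which to run a Perron--Frobenius or finite fixed-point argument (that device works for \emph{periodic} trees, i.e.\ directed covers of finite graphs). Moreover, the uniform bound you extract from the cutset transfer, $m_x(\lambda)\le C(\lambda)$, points the wrong way: you need a uniform \emph{lower} bound on min-cuts, and upper bounds on $m_x$ cannot propagate through the capped recursion $m_x=\lambda^{-1}\sum_y\min(1,m_y)$ to give positivity at the root. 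The standard argument (Lyons 1990; Lyons--Peres, Theorem 3.8) instead proceeds by contraposition: after reducing to a $0$-subperiodic leafless tree, if some $\lambda<\overline{gr}\,T$ admitted a finite cutset $\pi$ of $\lambda$-weight less than $\epsilon<1$, one pulls $\pi$ back through the subperiodicity injections into the subtree below each edge of $\pi$ and concatenates, and iterates this to obtain cutsets $\pi^{(k)}$ of weight less than $\epsilon^k$ whose depths grow linearly in $k$; counting the vertices of $T_n$ through their ancestor edges in $\pi^{(k)}$ then forces $\overline{gr}\,T\le\lambda$, a contradiction. This recursive cutset-concatenation (together with the reduction to the $0$-subperiodic, leafless case) is the missing core of the proof, and nothing in your proposal substitutes for it.
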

\subsection{Non quasi-transitive counter-examples}

\begin{figure}
\centering    
 \includegraphics[scale = 0.40]{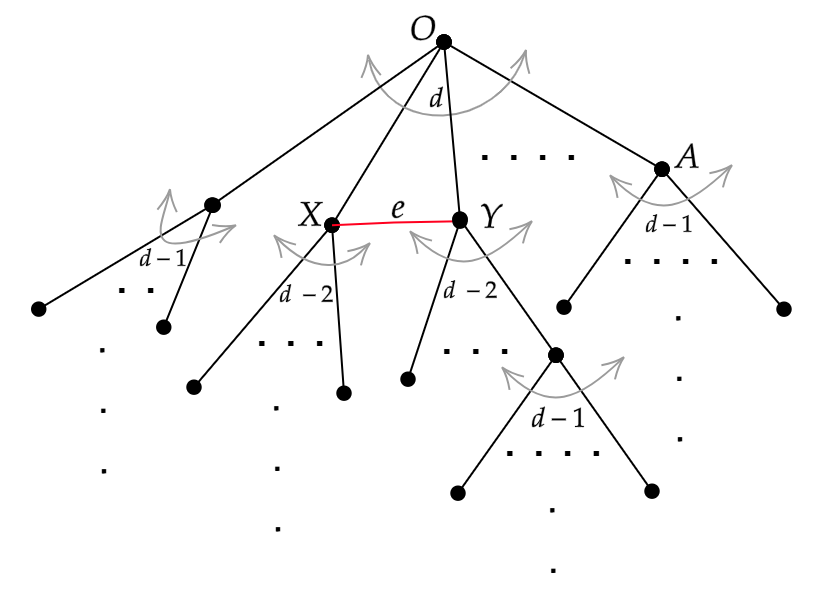}
 \caption{On removing the edge $e$ we get a sub-periodic tree $T$ with gr$T=$ br$T=d -1$}
    \label{fig:enter-label}

\end{figure}

We are now ready to give our counterexamples. Let $T$ be a tree with root $O$ such that every vertex in $T$ has degree $d$, except two vertices $X$, $Y$ that are adjacent to the root having degree $d -1$. Hence, $T = (V,E)$ is the graph formed by all black edges shown in Figure 1. Now define $G:= T \cup \{e\} = (V,E \cup \{e\})$ to be the graph obtained after adding the red edge e. We claim that $p_c(G) = \frac{1}{d-1}.$
 
\vspace{2mm}
 For the tree $T$, $|T_1| = d$, $|T_2| = (d-2)(d -1) + 2(d -2) = (d -2)(d +1)$, after this point every point has $d -1$ branches coming out, so $|T_{2+n}| = (d -2)(d +1)(d-1)^n$. Therefore $\text{gr}T = d -1.$ 

\vspace{2mm}
$T$ is clearly subperiodic since, for all $x$ such that $d_T(x, O) \geq 2$, we have $T_x$ is exactly $T_A$, allowing us to define the function $f(v) = \phi(v)$, where $\phi$ is the isomorphism between $T_x$ and $T_A$. Thus, $T$ is 1-subperiodic. By Theorem 6, $\text{br}T = \text{gr}T = d - 1$. Hence, $p_c(T) = \frac{1}{d - 1}.$

\vspace{2mm}
Since $T$ is a subgraph of $G$, we have $p_c(G) \leq p_c(T) = \frac{1}{d-1}$. However, $G$ is of degree $d$, so by a standard first-moment bound, $p_c(G) \geq \frac{1}{d - 1}$. Combining these two observations, we conclude that $p_c(G) = \frac{1}{d - 1}$. Thus, $G$ is a $d$-regular graph with cycles such that $p_c(G) = \frac{1}{d - 1}$. Finally, the fact that $G$ is not quasi-transitive follows from Theorem 2.

\vspace{2mm}

\section{Proof of the Theorem}
We now show that if $G$ is a quasi-transitive, $d$-regular graph then $p_c(G) > \frac{1}{d -1}.$ The key idea is to cover every quasi-transitive $d$-regular graph (with cycles) by a $d$-regular tree. We start by defining what a covering map means in the context of percolation, next we use the results of Martineau and Severo \cite{martineau2019strict} about critical thresholds under coverings.

\subsection{Critical points under coverings}
The question of critical points under coverings was asked by Benjamini and Schramm in their celebrated paper ``Percolation beyond $\mathbb{Z}^d,$ Many Questions and a Few Answers" \cite[\textcolor{blue}{Question~1}]{benjamini1996percolation}. 
They conjectured that if $G, H$ are quasi-transitive graphs and $G$ covers but is not isomorphic to $H$ and $p_c(G) < 1 $ then $p_c(G) < p_c(H)$. This conjecture was resolved by Martineau and Severo \cite{martineau2019strict}. Following their paper we set up some definitions necessary to define a covering map.

\vspace{2mm}
 Consider a map $\pi:V(G) \rightarrow V(H)$, we say that this map is a \textit{strong covering map} if its $1$-Lipschitz (i.e. $d_H(\pi(x), \pi(y)) \leq d_G(x,y)$) and it has the \textit{strong lifting property}: for every $x \in V(G)$, and for every neighbour $u$ of $\pi(x)$ there is a unique neighbour of $x$ that maps to $u$. Next we say that a map $\pi : V(G) \rightarrow V(H)$  has \textit{uniformly non-trivial fibres} (\cite{severo2020interpolation}) if there exists $R$ such that for all $x \in V(G)$ there exists $y \in V(G)$ such that $\pi(x) = \pi(y)$ and $0< d_G(x,y) \leq R$. We are now ready to state the main tool: 

\begin{theorem} [F.Severo, S. Martineau, \cite{martineau2019strict}]
    Let $G$ and $H$ be graphs of bounded such that there is a map $\pi: V(G) \rightarrow V(H)$ which is a strong covering map with uniformly non-trivial fibres. Then if $p_c(G) < 1$, we have $p_c(G) < p_c(H).$ 
 \end{theorem}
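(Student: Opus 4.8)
The statement is a deep result and I would not expect a short self-contained proof; the plan is to split it into an easy non-strict comparison and a hard strict improvement, and to implement the latter through an interpolation/essential-enhancement scheme in which the hypothesis of uniformly non-trivial fibres supplies a uniform quantitative gain.

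\emph{Non-strict bound.} First I would record the standard fact that a covering map cannot increase the critical threshold, $p_c(G)\le p_c(H)$. Fix $o\in V(G)$ and set $\rho=\pi(o)$. Running independent percolation on $H$ at parameter $p$ and lifting it to $G$ by declaring an edge $\tilde e$ open exactly when $\pi(\tilde e)$ is open, the strong lifting property yields unique lifting of open paths: every open path from $\rho$ in $H$ lifts to a unique open path from $o$ in $G$, and $\pi$ carries the (lifted) open cluster of $o$ onto that of $\rho$. Hence whenever $\rho$ percolates in $H$ the vertex $o$ percolates in $G$, and one checks by a standard coupling that this transfers to independent percolation on $G$. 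Thus the whole content of the theorem is the strict gap.

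\emph{Strict improvement.} Here I would follow the interpolation strategy of Martineau and Severo. The idea is to build a one-parameter family of percolation models $(\mathbb{P}_t)_{t\in[0,1]}$ deforming percolation on $G$ (at $t=0$) into percolation on $H$ (at $t=1$). The deformation uses the fibres: by the uniformly non-trivial fibres hypothesis every $x\in V(G)$ has a companion $y$ with $\pi(x)=\pi(y)$ and $0<d_G(x,y)\le R$, and the interpolation progressively ``activates'' the identification of such companion pairs (equivalently, inserts short connections between them), since identifying each fibre is exactly what turns $G$ into $H$. Writing $p_c(t)$ for the threshold of $\mathbb{P}_t$, I would then try to establish a strict differential inequality $\tfrac{d}{dt}p_c(t)\ge\delta>0$. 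The mechanism is that each newly activated fibre-connection has a uniformly positive probability of being pivotal for long-range connection just above $p_c(G)$; summing these uniform local gains (a Russo/Margulis-type computation, or a sharp-threshold/OSSS input) forces the threshold to move at a definite rate, so that $p_c(H)=p_c(1)>p_c(0)=p_c(G)$. The condition $p_c(G)<1$ enters precisely here: it guarantees a genuine supercritical phase of $G$ strictly below $1$ in which large clusters exist, so that the activated connections actually have a chance to be pivotal; for $p_c(G)=1$ (e.g.\ $d=2$) no such phase exists and the conclusion indeed fails.

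\emph{Main obstacle.} The hard step is making the local gain \emph{uniform} and converting it into a strict shift of the \emph{global} threshold. One must show that the pivotality of the activated fibre-connections is bounded below uniformly over the whole graph — this is where bounded degree, quasi-transitivity, and the uniform radius $R$ of the fibres are all used — and then integrate infinitely many infinitesimal improvements without them cancelling. Controlling this, together with verifying that the $t=1$ endpoint of the interpolation genuinely reproduces percolation on the quotient $H$, is the technical heart of \cite{martineau2019strict}, and I would rely on their machinery for it rather than expect to reproduce it by elementary means.
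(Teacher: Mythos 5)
The paper does not prove this theorem: it is quoted verbatim as an external result of Martineau and Severo \cite{martineau2019strict}, with only the remark that it ``relies on the theory of enhancements'' in the tradition of Aizenman--Grimmett and of interpolation between percolation configurations. Your proposal correctly identifies exactly that strategy --- an easy non-strict comparison via unique path lifting, followed by an interpolation between percolation on $G$ and on $H$ in which the uniformly non-trivial fibres supply a uniform essential enhancement --- so it is consistent with the proof the paper points to, though you are explicit (and right) that you are deferring the technical heart (uniform pivotality bounds and the integration of the differential inequality) to the cited machinery rather than supplying it. Two small points: the hypothesis in the statement is bounded degree (the word ``degree'' is missing in the paper's text), not quasi-transitivity, so quasi-transitivity should not appear in your list of what makes the local gain uniform; and in the non-strict step the pulled-back configuration on $G$ is not i.i.d., so the ``standard coupling'' you wave at (an exploration of the cluster of $\pi(o)$ in $H$ lifted edge by edge to distinct edges of $G$) is genuinely needed and worth spelling out if this step were to be made rigorous.
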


The above result relies on the theory of enhancements. A technique first introduced by Aizenman and Grimmett \cite{aizenman1991strict} as a recipe to prove strict inequalities between critical points of graphs, and is part of a more general idea of interpolation between percolation configurations \cite{severo2020interpolation}. For background on the technique of enhancements see \cite{severo2020interpolation}, \cite{balister2014essential}. We now show that this holds for $G = d$-regular tree and $H$ a $d$-regular quasi-transitive graph with cycles. In particular, we have the following:

 \begin{proposition}
         Let $T_{d}$ be the $d$-regular tree and $H$ be a quasi-transitive $d$-regular graph with cycles, then there exists a strong covering map $\pi$ with uniformly non-trivial fibres from $V(T_{d})$ to $V(H).$
 \end{proposition}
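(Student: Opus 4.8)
The plan is to take $\pi$ to be the universal covering map of $H$, realized concretely through non-backtracking walks. Fix a base vertex $o \in V(H)$ and let the vertices of $T_{d}$ be the non-backtracking walks in $H$ started at $o$, with two such walks declared adjacent when one extends the other by a single edge; set $\pi(w)$ to be the endpoint of the walk $w$. Because $H$ is $d$-regular, this construction yields a connected graph in which every vertex has degree $d$ (a walk of positive length has one parent and $d-1$ forward extensions, while the empty walk $o$ has $d$ extensions); being also acyclic, it is the $d$-regular tree $T_{d}$. I would then verify the two defining properties of a strong covering map. The $1$-Lipschitz property is immediate, since any path in $T_{d}$ from $x$ to $y$ projects to a walk in $H$ from $\pi(x)$ to $\pi(y)$, whence $d_{H}(\pi(x),\pi(y)) \le d_{T_{d}}(x,y)$. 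For the strong lifting property, note that the neighbours of a walk $x$ ending at $v$ are its unique parent (ending at the previous vertex $v_{-}$) together with its $d-1$ forward extensions (ending at the neighbours of $v$ other than $v_{-}$); thus $\pi$ restricts to a bijection from the neighbours of $x$ onto the neighbours of $v$, which is precisely the required unique-lifting statement.

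The substance of the proposition is the uniform non-triviality of the fibres, and the key step I would isolate is an exact formula for fibre distances. Given two walks $x, x'$ ending at the same vertex $v$, their distance in the tree $T_{d}$ equals the length of the reduced (backtrack-free) form of the closed walk $x^{-1}x'$ obtained by traversing $x$ backward and then $x'$ forward; indeed, the common prefix of $x$ and $x'$ is exactly what cancels under reduction. Under the standard identification of the fibre $\pi^{-1}(v)$ with the fundamental group $\pi_{1}(H,v)$ via $x' \mapsto x^{-1}x'$, this shows that
\[
D(x) := \min\{\, d_{T_{d}}(x,x') : x' \in \pi^{-1}(v),\ x' \neq x \,\} = \min\{\, |g| : g \in \pi_{1}(H,v),\ g \neq 1 \,\},
\]
where $|g|$ denotes reduced length. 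In particular $D(x)$ depends only on $v = \pi(x)$; write it $D(v)$. It is the length of the shortest non-trivial non-backtracking closed walk based at $v$.

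It then remains to bound $D(v)$ uniformly in $v$, and this is where quasi-transitivity enters. Any automorphism $\gamma$ of $H$ carries non-backtracking closed walks at $v$ to non-backtracking closed walks at $\gamma(v)$ of the same length, so $D$ is constant on each $\mathrm{AUT}(H)$-orbit; since $H$ is quasi-transitive there are finitely many orbits, hence $D$ takes only finitely many values. Each such value is finite because $H$ contains a cycle: a path from $v$ to that cycle, traversed out, once around, and back, is a closed walk whose class in $\pi_{1}(H,v)$ is non-trivial (it is non-zero already in homology), so $D(v) < \infty$. Taking $R := \max_{v} D(v)$ over the finitely many orbit representatives gives the required uniform bound, and $\pi$ therefore has uniformly non-trivial fibres.

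The main obstacle I anticipate is not the algebra but the geometric subtlety hidden in the vertices of $H$ that lie on no cycle. For such a vertex $v$ every incident edge is a bridge, so there is no non-backtracking closed walk based at $v$ itself, and one cannot produce a fibre-mate of $x$ simply by appending a loop at the endpoint. The reformulation through reduced words in $\pi_{1}(H,v)$ is exactly what sidesteps this: the relevant shortest loop has the form (bridge path)(cycle)(bridge path)$^{-1}$, which is reduced as a word even though it is not cyclically reduced. Making the distance formula $d_{T_{d}}(x,x') = |x^{-1}x'|$ precise, and checking that the exhibited loop is genuinely reduced and non-trivial, is the step that requires the most care.
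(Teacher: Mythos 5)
Your construction is the same as the paper's: both realize $T_d$ as the universal cover of $H$ via non-backtracking walks from a base vertex, take $\pi$ to be the endpoint map, and verify the Lipschitz and strong lifting properties in essentially identical ways. The only divergence is in the key step, uniform non-triviality of fibres. The paper argues directly: quasi-transitivity gives a uniform bound $K$ such that every vertex lies on a non-backtracking closed walk of length at most $K$, and it then surgically splices that closed walk onto the given walk $x$ to exhibit a distinct fibre-mate, with a short case analysis to keep the spliced walk non-backtracking. You instead prove the exact identity $d_{T_d}(x,x') = |x^{-1}x'|$ for reduced words, identify the fibre over $v$ with $\pi_1(H,v)$, and reduce the whole question to bounding $D(v)$, the length of the shortest non-trivial reduced closed walk at $v$, uniformly over the finitely many automorphism orbits. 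The two arguments rest on the same underlying fact, but your packaging buys two things: it makes the non-backtracking verification automatic (via confluence of reduction) rather than a case split, and it explicitly handles vertices lying on no cycle --- where the shortest reduced loop has the form $p\,c\,p^{-1}$ and is reduced but not cyclically reduced --- a point the paper's phrase ``cycle (not necessarily simple) through $x_n$'' asserts without justification. The paper's version is more elementary and self-contained in that it never invokes the fundamental group; yours is the more careful justification of the uniform bound that the paper takes for granted. Both are correct routes to the proposition.
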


\begin{proof}  
We start by constructing a graph $X$ from our graph $H$ which covers $H$ and is isomorphic to $T_{d}$. Fix a vertex $x_0 \in V(H).$ Define the vertices of $X$ to be the non-backtracking paths $<x_0,x_1, \cdots x_n>$ starting at $x_0$ (a path $<x_0,x_1, \cdots x_n>$ is called non-backtracking if $x_{i+2} \neq x_i \ \forall i$). Two paths are connected in $X$ if one is an extension of the other by an edge (this is precisely the universal cover). We claim that for a $d$-regular $H$, $X$ is isomorphic to $T_{d}.$ 

\vspace{2mm}

The fact that $X$ is a tree is clear since all paths are non-backtracking and start at a fix vertex $x_0.$ Now any point $<x_0, x_1, \cdots, x_n>$ has neighbours as $<x_0, x_1 \cdots, x_{n-1}>$ and $<x_0, x_1, \cdots, x_n, u>$ where $u$ runs over all neighbours of $x_n$ not equal to $x_{n-1},$ this shows $d$-regularity. Therefore $X \cong T_{d}.$

\vspace{2mm} For the covering map we let $\pi$ be the map which projects every path to its last vertex, more formally define $\pi: V(T_{d}) \rightarrow V(H) $ such that $\pi(<x_0, x_1, \cdots x_n>) = x_n$ where we identify $T_{d}$ with $X.$ We now show that this is a strong covering map with uniformly non-trivial fibres.

\vspace{2mm}
\textbf{Lipschitz property.} Let $x = \ <x_0, \cdots, x_n>,  y = \ <y_0, \cdots, y_m>$. We want to show that $d_H(\pi(x), \pi(y)) = d_H(x_n,y_m) \leq d_X(x,y)$. Let $z$ be the common ancestor of $x,y$ in $X$. Then $d_X(x,y) = d_X(x,z) + d_X(z,y).$ Since $x$ is a descendant of $z$ it is easy to see that $d_X(x,z) \geq d_H(\pi(x), \pi(z)).$ Thus by the above equation $d_X(x,y) \geq d_H(\pi(x), \pi(y)).$

\vspace{2mm}

\textbf{Uniformly non-trivial fibres.}
We show that there are uniformly non-trivial fibres. This is the only property that requires quasi-transitivity. Pick a $x = <x_0, x_1, \cdots, x_n>$. By quasi-transitivity, we can find a $K$ (independent of $x_n$) such that there is a cycle (not necessarily simple) $C = <x_n, x_{n+1}, \cdots, x_{n+m} = x_n>$ of length $m \leq K.$ If $x_{n-1} = x_{n+1}$, then $y = <x_0, \cdots, x_{n-1}, x_{n+2}, \cdots x_{n+m} = x>$ is a non-backtracking path satisfying $\pi(x) = \pi(y).$ Otherwise, consider the path $y = <x_0, x_1, \cdots, x_{n-1}, x_n, x_{n+1}, \cdots, x_n>$ since $x_{n-1} \neq x_{n+1}$, this is a non-backtracking path and gets mapped $\pi(x)$.

\vspace{2mm}

\textbf{Strong lifting property.}  Pick a $x = <x_0, x_1, \cdots x_n> \ \in 
V(X)$, for any neighbour $u$ of $\pi(x)$ we need to find a neighbour of $x$ mapping to it. If $u = x_{n-1}$ then let that neighbour be $<x_0,x_1, \cdots x_{n-1}>$, otherwise let it be $<x_0,x_1, \cdots x_n,u>$.

\vspace{2mm}
Thus $\pi$ is a strong covering map with uniformly non-trivial fibres, which shows the proposition. By our earlier comments, this also proves Theorem 2.
\end{proof}

\section{Concluding remarks}
Even though we worked with quasi-transitive graphs, the same proof extends to graphs with \textit{bounded local girth}. The concept of bounded local girth can be defined as follows: Consider a vertex $x$, define the \textit{girth} of $x$ as $$L_x:= \inf\{l(C): C \ \text{cycle}\footnote{We are including non-simple cycles, i.e. cycles which visit the same vertex multiple times.}, C \ni x\}$$ where $l(C)$ is the length of the cycle $C$. We say that a graph $G$ has bounded local girth if $\underset{x}{\sup} \ L_x < \infty.$

\vspace{2mm}
The only place where we used quasi-transitivity was to show that our map has uniformly non-trivial fibres. By bounded local girth, the same proof applies, and hence, for any $d$-regular graph $G$, we have $p_c(G) > \frac{1}{d - 1}$. Therefore, trees minimize $p_c$ in the space of all graphs with bounded local girth or no cycles.

\vspace{2mm}
A similar question can be asked for the uniqueness threshold $p_u$. Even though a theorem analogous to Theorem 7 has been established for $p_u$ (see \cite{martineau2019strict}), the same technique cannot be applied since $p_u(T) = 1$ for a tree $T.$

\vspace{2mm}

\section{Acknowledgements}
I thank Prof. Subhajit Goswami for suggesting the problem and for his guidance and comments throughout the project. This work was done as part of the Visiting Students Research Program (VSRP 2024) at the Tata Institute of Fundamental Research, Mumbai and I thank them for this opportunity.

\bibliography{ref}
\bibliographystyle{alpha}
Ishaan Bhadoo, DPMMS, University of Cambridge, UK.\\ \textit{Email address:} \href{ib530@cam.ac.uk}{\textcolor{blue}{ib530@cam.ac.uk}}\\
\end{document}